\documentclass[12pt]{article}
\usepackage[T1]{fontenc}
\usepackage{amsmath,amssymb,amsthm,amscd,dsfont}
\usepackage{amsfonts,latexsym,rawfonts,amsmath,amssymb,amsthm}
\usepackage{lscape}
\usepackage{amscd, float,times,rotating}
\usepackage{pb-diagram}
\usepackage[hyperfootnotes=false]{hyperref}
\hypersetup{hidelinks,pdftitle={Conformal Ricci Curvature and Spectral Estimates},pdfauthor={Xiaoshang Jin}}
\numberwithin{equation}{section}

\newtheoremstyle{refplain}
  {4pt}{4pt}
  {\itshape}
  {}
  {\bfseries}
  {.}
  {.5em}
  {}
\theoremstyle{refplain}
\newtheorem{theorem}{Theorem}[section]
\newtheorem{proposition}[theorem]{Proposition}
\newtheorem{lemma}[theorem]{Lemma}

\theoremstyle{remark}
\newtheorem{remark}[theorem]{Remark}

\newcommand{\Ric}{\operatorname{Ric}}
\newcommand{\dd}{\,\mathrm{d}}
\newcommand{\inrad}{\operatorname{inrad}}

\begin{document}

\title{Conformal Ricci Curvature and Spectral Estimates}
\author{Xiaoshang Jin\thanks{The author was supported by the Fundamental Research Funds for the Central Universities, HUST (No.~2025BRSXB002), and the National Natural Science Foundation of China (Grant No.~12471054).}}
\date{}
\maketitle

\begin{abstract}
We derive optimized upper bounds for the first Dirichlet eigenvalue of a bounded domain under lower bounds for the conformal Ricci tensor of Shen and Ye. The method also yields sharp estimates for the bottom spectrum on complete noncompact manifolds, a four-dimensional application involving $Q$-curvature, and local and global spectral-Ricci extensions of Cheng's estimate. As a consequence, we obtain a sharp comparison between the spectra of the Laplacian and the conformal Laplacian in terms of the smallest Schouten eigenvalue.
\end{abstract}

\noindent\textit{2020 Mathematics Subject Classification.} Primary: 58J50; Secondary: 53C21, 35P15.\\
\textit{Key words and phrases.} conformal Ricci curvature, spectral Ricci curvature, Dirichlet eigenvalue, conformal Laplacian, $Q$-curvature.

\section{Introduction}
Let $(M^n,g)$ be a Riemannian manifold, and let $\Omega\Subset M$ be a smooth, bounded, connected domain. A number $\lambda\in\mathbb R$ is a Dirichlet eigenvalue of $-\Delta_g$ on $\Omega$ if there exists a nonzero function $u\in C^\infty(\Omega)\cap C^0(\overline\Omega)$ satisfying
\[
\begin{cases}
-\Delta_g u=\lambda u & \text{in }\Omega,\\
u=0 & \text{on }\partial\Omega.
\end{cases}
\]
Here and below $\Delta_g=\operatorname{div}_g\nabla$. The first Dirichlet eigenvalue is positive and simple, its first eigenfunction can be chosen positive in $\Omega$, and
\[
\lambda_1(\Omega)
=
\inf_{0\neq\varphi\in C_c^\infty(\Omega)}
\frac{\displaystyle\int_\Omega |\nabla\varphi|^2\dd V_g}
{\displaystyle\int_\Omega \varphi^2\dd V_g}
=
\inf_{0\neq\varphi\in H_0^1(\Omega)}
\frac{\displaystyle\int_\Omega |\nabla\varphi|^2\dd V_g}
{\displaystyle\int_\Omega \varphi^2\dd V_g}.
\]

If $(M^n,g)$ is complete and noncompact and
\[
\Omega_1\Subset\Omega_2\Subset\cdots\Subset M,
\qquad
\bigcup_{j=1}^{\infty}\Omega_j=M,
\]
is a smooth exhaustion by connected domains, then domain monotonicity gives
\begin{equation}\label{eq:bottom-def}
\lambda_1(-\Delta_g)
:=\lim_{j\to\infty}\lambda_1(\Omega_j)
=
\inf_{0\neq\varphi\in C_c^\infty(M)}
\frac{\displaystyle\int_M|\nabla\varphi|^2\dd V_g}
{\displaystyle\int_M\varphi^2\dd V_g}
=\inf\sigma(-\Delta_g).
\end{equation}
Thus $\lambda_1(-\Delta_g)$ is the bottom of the spectrum of the nonnegative self-adjoint Laplace--Beltrami operator; see \cite{Chavel,LiBook}.

A fundamental theorem of Cheng \cite{Cheng} states that, on a complete noncompact manifold, if $\kappa\geq0$, then
\[
\Ric_g\geq-\kappa g
\quad\Longrightarrow\quad
\lambda_1(-\Delta_g)\leq\frac{n-1}{4}\,\kappa,
\]
and the constant is sharp on hyperbolic space. Replacing the Ricci lower bound by scalar-curvature information is substantially more delicate. Sharp estimates under additional topological or spin assumptions were obtained by Davaux \cite{Davaux} and, in dimension three, by Munteanu and Wang \cite{MW2024,MW2026}. Spectral versions of Ricci lower bounds, formulated through Schr\"odinger operators involving the smallest Ricci eigenvalue, have also been studied in \cite{CarronRose,AXBG,APX,CMMR,HongWang}. In particular, operators of the form $-\alpha\Delta_g+\Ric(x)$ appear explicitly in recent sharp splitting and boundary-splitting results \cite{APX,HongWang}.

For a positive smooth function $f$ and $\sigma>0$, Shen and Ye \cite{ShenYe} introduced
\begin{equation}\label{eq:conf-ric}
\Ric_g^{f,\sigma}:=\Ric_g-\sigma f^{-1}(\Delta_g f)g.
\end{equation}
We also allow the degenerate case $\sigma=0$, for which $\Ric_g^{f,0}=\Ric_g$. It is naturally associated with the conformal metric $f^{2\sigma}g$. For a smooth bounded connected domain $\Omega\Subset M$, we write
\begin{equation}\label{eq:inradius-def}
r_\Omega:=\inrad(\Omega):=\sup_{x\in\Omega}d_g(x,\partial\Omega).
\end{equation}
Our main result is a Dirichlet eigenvalue estimate under a lower bound for this tensor.

\begin{theorem}\label{thm:domain}
Let $\Omega\Subset M$ be a smooth bounded connected domain in an $n$-dimensional Riemannian manifold, $n\geq2$. Suppose that there exist
$\sigma\in[0,\frac{4}{n-1}),$ $f\in C^\infty(\Omega)$ with $f>0$ and $\kappa\in\mathbb R$ such that
\[
\Ric_g^{f,\sigma}\geq-\kappa g
\quad\text{on }\Omega.
\]
Then
\begin{equation}\label{eq:domain-main}
\lambda_1(\Omega)
\leq
\frac{n-1}{D_\sigma}
\left[
\sqrt{
\kappa+
\frac{4\pi^2\bigl((n-1)-(n-2)\sigma\bigr)}
{D_\sigma r_\Omega^2}
}
+
\frac{2\pi|n-3|}
{r_\Omega\sqrt{(n-1)D_\sigma}}
\right]^2.
\end{equation}
where $D_\sigma:=4-(n-1)\sigma.$
The quantity under the square root is nonnegative under the above assumptions.
\end{theorem}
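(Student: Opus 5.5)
Pick a point $p\in\Omega$ with $d_g(p,\partial\Omega)=r_\Omega$, so the geodesic ball $B:=B_p(r)$ lies in $\Omega$ for every $r<r_\Omega$. By domain monotonicity it suffices to bound the Rayleigh quotient of one test function supported in $B$. Following Shen--Ye, I will take a test function of the form $\varphi=f^{a}\,\psi(\rho)$, with $\rho=d_g(p,\cdot)$, $\psi$ a cutoff profile vanishing at $\rho=r$, and the exponent $a$ to be optimized. The factor $f^{a}$ converts the term $-\sigma f^{-1}\Delta f$ in $\Ric_g^{f,\sigma}$ into something the Rayleigh quotient can absorb.

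\textbf{Step 1 (the $f$-terms).} Write $\varphi=f^a\psi$ and integrate by parts:
\[
\int|\nabla\varphi|^2=\int f^{2a}|\nabla\psi|^2+\int\bigl(a^2f^{2a-2}|\nabla f|^2\psi^2\bigr)+\int 2a f^{2a-1}\psi\langle\nabla f,\nabla\psi\rangle .
\]
Using $\operatorname{div}(f^{2a-1}\nabla f)=f^{2a-1}\Delta f+(2a-1)f^{2a-2}|\nabla f|^2$, I will trade $|\nabla f|^2$ terms for $f^{-1}\Delta f$ terms. The curvature hypothesis controls $f^{-1}\Delta f$ through $\Ric$ along $\nabla\rho$: by Bochner and the Riccati inequality for $m=\Delta\rho$,
\[
m'+\tfrac{m^2}{n-1}\le -\Ric(\partial_\rho,\partial_\rho)\le \kappa-\sigma f^{-1}\Delta f .
\]
Because of this, the second ingredient is a weighted Laplacian comparison. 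I would multiply the Riccati inequality by $f^{2a}\psi^2$ and integrate over $B$, using the coarea structure in polar coordinates (dealing with the cut locus in the standard distributional way). Integrating $f^{-1}\Delta f\cdot f^{2a}\psi^2$ by parts produces $|\nabla f|^2$ terms with coefficient $-(2a-1)$ and a cross term with $\psi'$.

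\textbf{Step 2 (choosing $a$).} Collect everything as a quadratic form in $(f^{-1}\nabla f,\, m,\, \psi'/\psi)$. Choose $a$ (expected $a\sim \sigma(n-1)/\dots$) and complete squares so that the $|\nabla f|^2$ terms are nonpositive. This requires the discriminant condition $D_\sigma=4-(n-1)\sigma>0$, which is exactly the hypothesis $\sigma<\frac4{n-1}$. After this the $f$-dependence drops out except through a positive weight $w=f^{2a}$. What remains is an inequality of the form
\[
\int |\nabla\varphi|^2\le \int w\Bigl[\tfrac{n-1}{D_\sigma}\bigl(\sqrt{\kappa+c_1\psi'^2/\psi^2}+c_2|\psi'/\psi|\bigr)^2\Bigr]\psi^2 ,
\]
where the coefficients $c_1\propto (n-1)-(n-2)\sigma$ and $c_2\propto|n-3|$ come from the mean-curvature term $m\psi\psi'$. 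Here I would bound $m$ using the Riccati inequality instead of dropping it, and the $|n-3|$ factor arises when the radial weight $\rho^{n-1}$ versus $\rho^{2}$ is balanced.

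\textbf{Step 3 (the radial profile).} Take $\psi(\rho)=\cos\bigl(\tfrac{\pi\rho}{2r}\bigr)$, or an equivalent one-dimensional first eigenfunction. Then $|\psi'|/\psi$ is replaced, in averaged form, by $2\pi/r$-type constants, which yields the $4\pi^2/r_\Omega^2$ and $2\pi/r_\Omega$ terms. Letting $r\to r_\Omega$ gives \eqref{eq:domain-main}. Nonnegativity of the radicand follows because either $\kappa\ge0$, or the curvature bound with a negative $\kappa$ forces $r_\Omega$ small via the same Riccati argument, a Myers-type bound.

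\textbf{Main obstacle.} I expect the main obstacle to be Step 2: handling the ratio $\psi'/\psi$, which is unbounded near $\rho=r$. To get clean constants the argument must be arranged as a genuinely one-dimensional weighted eigenvalue problem along each geodesic rather than a pointwise bound. My first attempt is to reduce to an ODE comparison for $J=f^{2a}\cdot(\text{area density})$ along radial geodesics. This gives $J''/J$-type control of the form $(J^{1/k})''\le \frac{\kappa k}{\dots}J^{1/k}$ with $k=\frac{4(n-1)}{D_\sigma}$-ish. I would then apply a one-dimensional Cheng-type estimate for $\int J\psi'^2/\int J\psi^2$ on $[0,r]$ with this concavity condition. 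That one-dimensional lemma is where the explicit constants are produced.
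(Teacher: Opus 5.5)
\textbf{There is a genuine gap in Steps 1--2, and the fallback in your last paragraph has the same problem.} The hypothesis only gives $\Ric_g(\partial_\rho,\partial_\rho)\geq-\kappa+\sigma f^{-1}\Delta_g f$. But $\Delta_g f=\partial_\rho^2 f+m\,\partial_\rho f+\Delta_{S_\rho}f$ contains the intrinsic Laplacian of $f$ on the geodesic spheres, and nothing controls this transverse term along an individual radial $g$-geodesic. So the Riccati inequality $m'+\frac{m^2}{n-1}\leq\kappa-\sigma f^{-1}\Delta_g f$ gives neither of the things you need:
\begin{itemize}
\item a pointwise comparison $m\leq\bar m$, which is what a Cheng-type test-function argument actually uses, since $m$ enters the Rayleigh quotient through $-\int\psi\psi' m$;
\item the claimed concavity of $(f^{2a}J)^{1/k}$.
\end{itemize}
Integrating the Riccati inequality over the ball against $G=f^{2a}\psi^2$ does turn $\Delta_g f$ into gradient terms, but it discards the comparison information. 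After integrating $m'$ by parts you get, up to cut-locus boundary terms $J'G$ of no definite sign, $-\int m\,\partial_\rho G-\frac{n-2}{n-1}\int m^2G\leq\int(\kappa-\sigma f^{-1}\Delta_g f)G$. For $n\geq3$ the left side is a concave quadratic in $m$, so the inequality holds for arbitrarily large $m$ and yields no upper bound.

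Consequently the inequality displayed in Step 2 is reverse-engineered from the answer rather than derived. Step 3 then has nothing to act on: $|\psi'|/\psi$ is unbounded, it sits inside a nonlinear expression, and the weight $f^{2a}J$ is uncontrolled, so no averaging can replace it by a constant. For $\sigma>0$ the same objection applies to your Myers-type argument for the radicand. This transverse-Laplacian issue is exactly why Shen--Ye work along geodesics of $u^{2\tau}g$ rather than of $g$. Summing the second variation of $\widetilde g$-length over $n-1$ parallel normal fields, and using the $\widetilde g$-geodesic equation, reconstructs the full $\Delta_g\ln u$ along a single curve.

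\textbf{The missing idea is to put the Dirichlet eigenfunction itself, not a test function, into the conformal factor.} Let $w>0$ be the first eigenfunction, take $0<\alpha<\frac{4}{n-1}-\sigma$, and set $U=f^{\sigma/(\sigma+\alpha)}w^{\alpha/(\sigma+\alpha)}$.
\begin{itemize}
\item Lemma~\ref{lem:mean} gives $(\sigma+\alpha)U^{-1}\Delta_gU\leq\sigma f^{-1}\Delta_gf-\alpha\lambda_1(\Omega)$, hence $\Ric_g^{U,\sigma+\alpha}\geq(\alpha\lambda_1(\Omega)-\kappa)g$.
\item Apply the Shen--Ye length bound to $U^{2(\sigma+\alpha)}g$-minimizing geodesics from $x$ to $\partial\Omega_j$ for an exhaustion; the exhaustion is needed because $U$ degenerates at $\partial\Omega$, and this is Proposition~\ref{prop:radius}. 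It bounds $r_\Omega$ in terms of $\alpha\lambda_1(\Omega)-\kappa$.
\item This rearranges to $\lambda_1(\Omega)\leq\frac{A}{\alpha}+\frac{B}{\frac{4}{n-1}-\sigma-\alpha}$, where $A$ is the radicand in \eqref{eq:domain-main} and $B=\frac{4\pi^2(n-3)^2}{(n-1)D_\sigma r_\Omega^2}$.
\item Letting $\alpha\downarrow0$ forces $A\geq0$, which settles the radicand. Alternatively, for $\sigma>0$ and $\kappa<0$, apply Shen--Ye directly with $u=f$, $\tau=\sigma$, using $n-1+\frac{(n-3)^2}{4/\sigma-n+1}=\frac{4((n-1)-(n-2)\sigma)}{D_\sigma}$.
\item Minimizing over $\alpha$ gives $\frac{n-1}{D_\sigma}(\sqrt A+\sqrt B)^2$, which is \eqref{eq:domain-main}.
\end{itemize}
The particular shape of the bound, including the $|n-3|$ term, comes from this optimization in $\alpha$, not from any radial profile.
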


\begin{remark}\label{rem:consequences}
Theorem \ref{thm:domain} has the following immediate consequences and limiting cases.
\begin{enumerate}\renewcommand{\labelenumi}{(\roman{enumi})}
\item If $\kappa=0$, namely $\Ric_g^{f,\sigma}\geq0$ on $\Omega$, then
\[
\lambda_1(\Omega)
\leq
\frac{4\pi^2}{D_\sigma^2r_\Omega^2}
\left[
\sqrt{(n-1)\bigl((n-1)-(n-2)\sigma\bigr)}
+|n-3|
\right]^2.
\]

\item If $f$ is constant, then $\Ric_g^{f,\sigma}=\Ric_g$ for every $\sigma$. Taking $\sigma=0$, if $\Ric_g\geq-\kappa g$ on $\Omega$, then
\begin{equation}\label{eq:ricci-domain}
\lambda_1(\Omega)
\leq
\frac14
\left[
\sqrt{(n-1)\kappa+\frac{(n-1)^2\pi^2}{r_\Omega^2}}
+\frac{\pi|n-3|}{r_\Omega}
\right]^2.
\end{equation}

\item If $(M^n,g)$ is complete and noncompact and the assumptions of Theorem \ref{thm:domain} hold globally with $\kappa\geq0$, then a smooth exhaustion with inradii tending to infinity gives
\begin{equation}\label{eq:main-bottom}
\lambda_1(-\Delta_g)\leq\frac{\kappa}{\frac{4}{n-1}-\sigma}.
\end{equation}

\item The coefficient in \eqref{eq:main-bottom} is sharp for every fixed $0\leq\sigma<\frac{4}{n-1}$. On $\mathbb H^n(-a^2)$, let $b$ be a Busemann function satisfying $|\nabla b|=1$ and $\Delta b=(n-1)a$, and set
\[
f=\exp\left(-\frac{(n-1)a}{2}b\right).
\]
Then
\[
\frac{\Delta f}{f}=-\frac{(n-1)^2a^2}{4},
\qquad
\lambda_1\bigl(-\Delta_{\mathbb H^n(-a^2)}\bigr)=\frac{(n-1)^2a^2}{4},
\]
and
\[
\Ric^{f,\sigma}
=-\left(\frac{4}{n-1}-\sigma\right)
\frac{(n-1)^2a^2}{4}\,g.
\]
Thus equality holds in \eqref{eq:main-bottom}. At $\sigma=\frac{4}{n-1}$ the same example has $\Ric^{f,\sigma}=0$ and $\lambda_1(-\Delta_g)>0$, so the range of $\sigma$ is optimal. When $f$ is constant and $\sigma=0$, \eqref{eq:main-bottom} reduces exactly to Cheng's estimate.
\end{enumerate}
\end{remark}

In dimension four, let
\[
A_g=\frac{1}{2}\left(\Ric_g-\frac{R_g}{6}g\right)
\]
be the Schouten tensor. Chang--Gursky--Yang proved \cite[Lemma 1.2]{CGY} that
\[
\Ric_g\geq12\frac{\sigma_2(A_g)}{R_g}g
\qquad\text{when }R_g>0.
\]
Since the four-dimensional $Q$-curvature satisfies
\[
Q_g=-\frac{1}{6}\Delta_gR_g+4\sigma_2(A_g),
\]
see also \cite{Branson1985}, it follows that
\begin{equation}\label{eq:Q-conformal-Ricci}
\Ric_g-\frac{\Delta_gR_g}{2R_g}g
\geq3\frac{Q_g}{R_g}g.
\end{equation}
This observation leads to the following application of Theorem \ref{thm:domain} and the global estimate \eqref{eq:main-bottom}.

\begin{theorem}\label{thm:Q}
Let $\Omega\Subset M^4$ be smooth, bounded, and connected. Assume that $R_g>0$ and $\frac{Q_g}{R_g}\geq-\kappa$ on $\Omega$ for some $\kappa\in\mathbb R$. Then
\begin{equation}\label{eq:Q-domain}
\lambda_1(\Omega)
\leq
\frac{1}{25}
\left[
\sqrt{90\kappa+\frac{96\pi^2}{r_\Omega^2}}
+\frac{4\pi}{r_\Omega}
\right]^2.
\end{equation}
If $(M^4,g)$ is complete and noncompact, $R_g>0$ on $M$, $\kappa\geq0$, and $\frac{Q_g}{R_g}\geq-\kappa$ globally, then
\begin{equation}\label{eq:Q-bottom}
\lambda_1(-\Delta_g)\leq\frac{18}{5}\kappa.
\end{equation}
\end{theorem}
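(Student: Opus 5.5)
The plan is to apply Theorem~\ref{thm:domain} in dimension $n=4$ with the choices
\[
f=R_g,\qquad \sigma=\tfrac12 .
\]
These are admissible. Since $R_g>0$ is smooth on $\Omega$, $f$ is a positive smooth function. Moreover $\sigma=\frac12\in[0,\frac{4}{3})=[0,\frac{4}{n-1})$.

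\emph{Step 1: the curvature hypothesis.} With these choices, definition \eqref{eq:conf-ric} gives
\[
\Ric_g^{f,1/2}=\Ric_g-\frac{\Delta_gR_g}{2R_g}\,g .
\]
This is exactly the left-hand side of \eqref{eq:Q-conformal-Ricci}. That inequality comes from the Chang--Gursky--Yang bound together with the formula $Q_g=-\frac16\Delta_gR_g+4\sigma_2(A_g)$. Combining it with the assumption $Q_g/R_g\ge-\kappa$ yields
\[
\Ric_g^{R_g,1/2}\ \ge\ 3\frac{Q_g}{R_g}\,g\ \ge\ -3\kappa\, g \qquad\text{on }\Omega .
\]
So the hypothesis of Theorem~\ref{thm:domain} holds with $\kappa$ replaced by $3\kappa$. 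The only point needing care is that the sign convention $\Delta_g=\operatorname{div}_g\nabla$ is the one used in \eqref{eq:Q-conformal-Ricci}; it is, since both are stated in this paper.

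\emph{Step 2: evaluating the constants in \eqref{eq:domain-main}.} For $n=4$ and $\sigma=\frac12$ we have $D_\sigma=4-\frac32=\frac52$, so $(n-1)/D_\sigma=\frac65$. Also $(n-1)-(n-2)\sigma=2$, so the term under the square root becomes
\[
3\kappa+\frac{4\pi^2\cdot 2}{\frac52 r_\Omega^2}=3\kappa+\frac{16\pi^2}{5r_\Omega^2}.
\]
Since $|n-3|=1$, the second summand is
\[
\frac{2\pi}{r_\Omega\sqrt{15/2}} .
\]
Next write $\frac65=\frac{30}{25}$ and absorb the factor $\sqrt{30}$ into the bracket. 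This turns the first square root into $\sqrt{90\kappa+96\pi^2/r_\Omega^2}$. The second term becomes $\sqrt{30\cdot 8\pi^2/(15r_\Omega^2)}=4\pi/r_\Omega$. The result is exactly \eqref{eq:Q-domain}.

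\emph{Step 3: the global estimate.} If the assumptions hold on all of a complete noncompact $M^4$ with $\kappa\ge0$, then Step 1 holds globally with constant $3\kappa\ge0$. Remark~\ref{rem:consequences}(iii), i.e. \eqref{eq:main-bottom}, then gives
\[
\lambda_1(-\Delta_g)\le\frac{3\kappa}{\frac43-\frac12}=\frac{18}{5}\kappa,
\]
which is \eqref{eq:Q-bottom}.

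There is no real obstacle beyond the choice of $(f,\sigma)$; the rest is arithmetic. Once one sees that $\sigma\,\Delta f/f$ must reproduce $\Delta R/(2R)$, the natural choice is $f=R_g$ with $\sigma=\frac12$. One could also take $f=R_g^{s}$ with $\sigma s=\frac12$ and $s\le1$, since the extra term $\sigma s(1-s)|\nabla R|^2/R^2\ge0$ only helps. However, $s=1$ minimizes $\sigma$ and therefore gives the stated constants.
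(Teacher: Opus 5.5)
Your proposal is correct and matches the paper's proof: take $f=R_g$ and $\sigma=\frac12$, deduce $\Ric_g^{R,\frac12}\geq 3\frac{Q_g}{R_g}g\geq-3\kappa g$, then substitute $n=4$, $D_{1/2}=\frac52$ and $3\kappa$ into \eqref{eq:domain-main} and \eqref{eq:main-bottom}; your arithmetic is right. The only difference is that the paper does not quote Chang--Gursky--Yang for the pointwise bound $\Ric_g(X,X)-\frac{\Delta_gR_g}{2R_g}\geq 3\frac{Q_g}{R_g}$; it re-derives it from $Q_g=\frac16(-\Delta_gR_g-3|\Ric_g|^2+R_g^2)$ using a Cauchy-inequality lower bound for $|\Ric_g|^2$.
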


In particular, if $R_g>0$ and $\lambda_1(-\Delta_g)>0$, \eqref{eq:Q-bottom} also gives the obstruction
\[
\inf_M\frac{Q_g}{R_g}
\leq-\frac{5}{18}\lambda_1(-\Delta_g).
\]

We next formulate an intrinsic spectral consequence of the conformal-Ricci estimate.  For each $x\in M$, define
\[
\Ric(x):=\min\bigl\{\Ric_g(X,X):X\in T_xM,\ |X|_g=1\bigr\}.
\]
Thus $\Ric(x)$ is the smallest Ricci eigenvalue at the point $x$.  If $\Omega\Subset M$ is a smooth bounded connected domain and $\alpha\geq0$, define
\begin{equation}\label{eq:spectral-ricci-domain-def}
\lambda_1\bigl(-\alpha\Delta_g+\Ric(x),\Omega\bigr)
:=
\inf_{0\neq\varphi\in C_c^\infty(\Omega)}
\frac{\displaystyle
\alpha\int_\Omega|\nabla\varphi|^2\dd V_g
+\int_\Omega\Ric(x)\varphi^2\dd V_g}
{\displaystyle\int_\Omega\varphi^2\dd V_g}.
\end{equation}
For a complete noncompact manifold, we also write
\begin{equation}\label{eq:spectral-ricci-bottom}
\lambda_1\bigl(-\alpha\Delta_g+\Ric(x)\bigr)
:=
\inf_{0\neq\varphi\in C_c^\infty(M)}
\frac{\displaystyle
\alpha\int_M|\nabla\varphi|^2\dd V_g
+\int_M\Ric(x)\varphi^2\dd V_g}
{\displaystyle\int_M\varphi^2\dd V_g}
\in\mathbb R\cup\{-\infty\}.
\end{equation}

\begin{theorem}\label{thm:spectral-cheng}
Let $\Omega\Subset M^n$ be a smooth bounded connected domain, $n\geq2$. For every
\[
0\leq\alpha<\frac{4}{n-1},
\qquad
0<\beta<\frac{4}{n-1}-\alpha,
\]
one has
\begin{equation}\label{eq:spectral-domain}
\lambda_1\bigl(-\alpha\Delta_g+\Ric(x),\Omega\bigr)
+\beta\lambda_1(\Omega)\leq
\frac{\pi^2}{r_\Omega^2}
\left(
 n-1+
 \frac{(n-3)^2}{\frac{4}{\alpha+\beta}-n+1}
\right).
\end{equation}
Consequently, if $(M^n,g)$ is complete and noncompact, then for every
$0\leq\alpha<\frac{4}{n-1}$,
\begin{equation}\label{eq:spectral-cheng}
\lambda_1\bigl(-\alpha\Delta_g+\Ric(x)\bigr)
\leq
-\left(\frac{4}{n-1}-\alpha\right)\lambda_1(-\Delta_g)
\leq0.
\end{equation}
The inequalities in \eqref{eq:spectral-cheng} are understood in the extended real sense.
\end{theorem}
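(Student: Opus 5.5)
The plan is to derive \eqref{eq:spectral-domain} from Theorem \ref{thm:domain} with $\sigma=\alpha$, taking $f$ to be a positive ground state of the Schr\"odinger operator $-\alpha\Delta_g+\Ric(x)$. Write $\mu:=\lambda_1(-\alpha\Delta_g+\Ric(x),\Omega)$.

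\textbf{Step 1: reduction to Theorem \ref{thm:domain}.} First suppose $\alpha>0$. The function $\Ric(x)$ is only Lipschitz, so I regularize it. Fix $\varepsilon>0$ and choose $h_\varepsilon\in C^\infty(\overline\Omega)$ with $\Ric(x)-\varepsilon\leq h_\varepsilon\leq\Ric(x)$. Let $\mu_\varepsilon$ be the first Dirichlet eigenvalue of $-\alpha\Delta_g+h_\varepsilon$ on $\Omega$. Then $\mu-\varepsilon\leq\mu_\varepsilon\leq\mu$, and there is a smooth eigenfunction $\varphi>0$ in $\Omega$ with $-\alpha\Delta_g\varphi+h_\varepsilon\varphi=\mu_\varepsilon\varphi$. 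Pointwise in $\Omega$ this gives
\[
\Ric_g\geq\Ric(x)g\geq h_\varepsilon g=\Bigl(\mu_\varepsilon+\alpha\frac{\Delta_g\varphi}{\varphi}\Bigr)g,
\]
that is, $\Ric_g^{\varphi,\alpha}\geq\mu_\varepsilon g$. Thus Theorem \ref{thm:domain} applies with $\sigma=\alpha$, $f=\varphi$ and $\kappa=-\mu_\varepsilon$. If $\alpha=0$, no eigenfunction is needed: $\mu=\inf_\Omega\Ric(x)$, so we may take $f\equiv1$, $\sigma=0$ and $\kappa=-\mu$.

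\textbf{Step 2: the algebraic optimization.} Write $A=\frac{4\pi^2((n-1)-(n-2)\alpha)}{D_\alpha}$ and $B^2=\frac{4\pi^2(n-3)^2}{(n-1)D_\alpha}$. Put $t:=\sqrt{-\mu_\varepsilon+A/r_\Omega^2}\geq0$, which is well defined by the last assertion of Theorem \ref{thm:domain}. Then $\mu_\varepsilon=A/r_\Omega^2-t^2$, and with $c:=\beta(n-1)/D_\alpha$, \eqref{eq:domain-main} gives
\[
\mu_\varepsilon+\beta\lambda_1(\Omega)\leq\frac{A}{r_\Omega^2}-t^2+c\Bigl(t+\frac{B}{r_\Omega}\Bigr)^2 .
\]
The hypothesis $\beta<\frac4{n-1}-\alpha$ is exactly the condition $c<1$. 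Hence the right-hand side is a concave quadratic in $t$. Its maximum over $t\in\mathbb R$ is $\frac{1}{r_\Omega^2}\bigl(A+\frac{c}{1-c}B^2\bigr)$, where $\frac{c}{1-c}=\frac{\beta(n-1)}{D_{\alpha+\beta}}$.

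It remains to simplify this constant. One checks
\[
4(n-1)-4(n-2)\alpha=(n-1)D_\alpha+\alpha(n-3)^2 ,
\]
so $A=\pi^2\bigl(n-1+\frac{\alpha(n-3)^2}{D_\alpha}\bigr)$. One also checks $\alpha D_{\alpha+\beta}+4\beta=(\alpha+\beta)D_\alpha$. Together these give
\[
A+\frac{c}{1-c}B^2=\pi^2\Bigl(n-1+\frac{(\alpha+\beta)(n-3)^2}{D_{\alpha+\beta}}\Bigr),
\]
which is the right-hand side of \eqref{eq:spectral-domain}, since $\frac{\alpha+\beta}{D_{\alpha+\beta}}=\frac{1}{\frac{4}{\alpha+\beta}-n+1}$. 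Letting $\varepsilon\to0$ and using $\mu_\varepsilon\geq\mu-\varepsilon$ then yields \eqref{eq:spectral-domain}.

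\textbf{Step 3: the complete noncompact case.} Take a smooth exhaustion $\Omega_j$ with $r_{\Omega_j}\to\infty$. By domain monotonicity,
\[
\lambda_1\bigl(-\alpha\Delta_g+\Ric(x)\bigr)\leq\lambda_1\bigl(-\alpha\Delta_g+\Ric(x),\Omega_j\bigr),
\qquad
\lambda_1(\Omega_j)\downarrow\lambda_1(-\Delta_g).
\]
Fix $\beta$ and let $j\to\infty$ in \eqref{eq:spectral-domain}; this gives $\lambda_1(-\alpha\Delta_g+\Ric(x))\leq-\beta\lambda_1(-\Delta_g)$, where the left side may be $-\infty$. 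Letting $\beta\uparrow\frac4{n-1}-\alpha$ gives \eqref{eq:spectral-cheng}.

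\textbf{Main obstacle.} The main care is the regularity issue in Step 1: $\Ric(x)$ is merely Lipschitz, while Theorem \ref{thm:domain} needs a smooth $f$, and the $\varepsilon$-approximation handles this. The algebraic identity in Step 2, once verified, closes the argument, and it reproduces the precise constant in \eqref{eq:spectral-domain}.
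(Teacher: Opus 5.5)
Your proof is correct and follows the paper's argument: you regularize $\Ric(x)$ from below by smooth potentials, use the positive Dirichlet ground state of $-\alpha\Delta_g+V$ as the conformal factor with $\sigma=\alpha$ (or $f\equiv1$, $\sigma=0$ when $\alpha=0$), and pass to the complete case by exhaustion. The one difference is that the paper puts the free parameter $\beta$ directly into the unoptimized estimate \eqref{eq:domain-gamma}, which gives \eqref{eq:spectral-domain} in one line, whereas you start from the optimized bound \eqref{eq:domain-main} and undo the optimization by maximizing the concave quadratic in $t$; this is valid, your two algebraic identities check out, and it only uses the stated form of Theorem~\ref{thm:domain}, but it is a detour because \eqref{eq:domain-main} is just the infimum of \eqref{eq:domain-gamma} over the free parameter.
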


\begin{remark}\label{rem:spectral-cheng}
The global estimate \eqref{eq:spectral-cheng} has the following three useful interpretations.  It is sharp for every admissible $\alpha$; equality holds on both Euclidean space and hyperbolic space.
\begin{enumerate}\renewcommand{\labelenumi}{(\roman{enumi})}
\item For $\alpha=0$, the first spectral term in \eqref{eq:spectral-cheng} is $\inf_M\Ric(x)$, and \eqref{eq:spectral-cheng} becomes
\[
\lambda_1(-\Delta_g)
\leq
-\frac{n-1}{4}\inf_M\Ric(x).
\]
In particular, if $\Ric_g\geq-\kappa g$, this is exactly Cheng's estimate
$\lambda_1(-\Delta_g)\leq\frac{n-1}{4}\kappa$.

\item For $0<\alpha<\frac{4}{n-1}$, \eqref{eq:spectral-cheng} is a spectral-Ricci extension of Cheng's estimate.  The condition
\[
\lambda_1\bigl(-\alpha\Delta_g+\Ric(x)\bigr)\geq0
\]
is commonly referred to as nonnegative Ricci curvature in the spectral sense; see, for example, \cite{APX,CMMR,HongWang}.  In the complete noncompact setting of Theorem \ref{thm:spectral-cheng}, \eqref{eq:spectral-cheng} shows that this nonnegativity is necessarily an equality case:
\[
\lambda_1\bigl(-\alpha\Delta_g+\Ric(x)\bigr)=0,
\qquad
\lambda_1(-\Delta_g)=0.
\]
The cited works use spectral nonnegativity as a curvature hypothesis in splitting, criticality, and boundary-splitting problems; the conclusion above records its spectral consequence in the subcritical range $0<\alpha<\frac{4}{n-1}$ for complete noncompact manifolds.

\item Suppose $n\geq4$ and take $\alpha=\frac{2}{n-2}$.  Let
\[
L_g:=-\frac{4(n-1)}{n-2}\Delta_g+R_g
\]
be the conformal Laplacian; $\lambda_1(L_g)$ denotes its quadratic-form spectral bottom. Let
\[
A_g:=\frac1{n-2}\left(\Ric_g-\frac{R_g}{2(n-1)}g\right)
\]
be the Schouten tensor.  For each $x\in M$, define
\[
\lambda_{\min}(A_g)(x)
:=\min\bigl\{A_g(X,X):X\in T_xM,\ |X|_g=1\bigr\}.
\]
Since
\[
\Ric(x)=(n-2)\lambda_{\min}(A_g)(x)+\frac{R_g(x)}{2(n-1)},
\]
one has the operator identity
\[
-\frac{2}{n-2}\Delta_g+\Ric(x)
=
(n-2)\lambda_{\min}(A_g)(x)+\frac{1}{2(n-1)}L_g.
\]
Consequently, by the variational characterization of the spectral bottom,
\[
\lambda_1\!\left(-\frac{2}{n-2}\Delta_g+\Ric(x)\right)
\geq
(n-2)\inf_{x\in M}\lambda_{\min}(A_g)(x)
+\frac{1}{2(n-1)}\lambda_1(L_g).
\]
Combining this inequality with Theorem \ref{thm:spectral-cheng} gives
\begin{equation}\label{eq:schouten-spectrum}
\lambda_1(-\Delta_g)
+\frac{n-2}{4(n-3)}\lambda_1(L_g)
\leq
-\frac{(n-1)(n-2)^2}{2(n-3)}
\inf_{x\in M}\lambda_{\min}(A_g)(x).
\end{equation}
The inequality is understood in the extended real sense and is also sharp: equality holds on $\mathbb R^n$ and on $\mathbb H^n(-a^2)$.
\end{enumerate}
\end{remark}

The paper is organized as follows. Section 2 recalls the Shen--Ye geodesic length estimate and derives the boundary-distance consequence needed later. Section 3 proves the Dirichlet estimate and obtains the bottom-spectrum estimate by exhaustion. Section 4 treats the four-dimensional $Q$-curvature application. Section 5 proves the bounded-domain estimate \eqref{eq:spectral-domain} and derives the global spectral-Ricci extension \eqref{eq:spectral-cheng} by exhaustion.

\section{Preliminaries}

\subsection{Conformal Ricci curvature and the Shen--Ye geodesic estimate}

Let $u>0$ and $0<\tau<\frac{4}{n-1}$, and set $\widetilde g=u^{2\tau}g$. The geodesic estimate needed below is Lemma 2 of Shen--Ye \cite{ShenYe}. In the notation used here, if a $\widetilde g$-geodesic $\alpha:[0,l]\to M$, reparametrized by $g$-arc length, minimizes $\widetilde g$-length to second order under fixed-endpoint variations and
\[
\Ric_g^{u,\tau}\geq\eta g
\]
along the curve for some $\eta>0$, then
\begin{equation}\label{eq:SY-length}
l
\leq
\frac{\pi}{\sqrt\eta}
\left(
 n-1+\frac{(n-3)^2}{\frac{4}{\tau}-n+1}
\right)^{\frac{1}{2}}.
\end{equation}

\subsection{A boundary-distance consequence}

\begin{proposition}\label{prop:radius}
Let $\Omega\Subset M$ be a connected smooth domain in an $n$-dimensional Riemannian manifold, $n\geq2$. Suppose that $u\in C^\infty(\Omega)$ is positive, that $0<\tau<\frac{4}{n-1}$, and that
\[
\Ric_g^{u,\tau}\geq\eta g
\]
for some $\eta>0$. Then
\begin{equation}\label{eq:radius}
\inrad(\Omega)
\leq
\frac{\pi}{\sqrt\eta}
\left(
 n-1+\frac{(n-3)^2}{\frac{4}{\tau}-n+1}
\right)^{\frac{1}{2}}.
\end{equation}
\end{proposition}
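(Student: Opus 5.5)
Set $L$ for the right-hand side of \eqref{eq:radius} and argue by contradiction. Suppose $\inrad(\Omega)>L$. Then there is a point $p\in\Omega$ with $d_g(p,\partial\Omega)>L$, and we fix $\rho$ with $L<\rho<d_g(p,\partial\Omega)$. The closed $g$-ball $\overline{B_g(p,\rho)}$ is then a compact subset of $\Omega$. The goal is a curve that minimizes $\widetilde g$-length, with $\widetilde g=u^{2\tau}g$, whose $g$-length exceeds $L$; this will contradict the Shen--Ye estimate \eqref{eq:SY-length}.

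\textbf{Step 1.} Since $u$ is smooth and positive on $\Omega$ but may blow up or degenerate near $\partial\Omega$, we do not work with $\widetilde g$ on all of $\Omega$. Instead we work inside the compact set $K=\overline{B_g(p,\rho)}$, where $c\le u\le C$. We take the $\widetilde g$-distance from $p$ to the $g$-sphere $S=\partial B_g(p,\rho)$ and choose a minimizing curve $\gamma$ from $p$ to $S$ among curves in $K$. Existence follows from Arzel\`a--Ascoli, since $\widetilde g$ and $g$ are uniformly equivalent on $K$ and $K$ is compact.

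\textbf{Step 2.} We check that $\gamma$ is a genuine $\widetilde g$-geodesic. Its interior lies in the open ball: if it touched $S$ earlier, truncating the curve would give something shorter. So the interior is a local, unconstrained minimizer of $\widetilde g$-length between its endpoints. Every subarc minimizes $\widetilde g$-length with fixed endpoints, hence is a smooth $\widetilde g$-geodesic with nonnegative second variation under fixed-endpoint variations.

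\textbf{Step 3.} Since $\gamma$ joins $p$ to a point at $g$-distance $\rho$, its $g$-length is at least $\rho>L$. We take an initial subarc of $g$-length $l$ with $L<l<\rho$ and reparametrize it by $g$-arc length. This subarc minimizes $\widetilde g$-length to second order, and $\Ric_g^{u,\tau}\ge\eta g$ holds along it. Lemma 2 of Shen--Ye \eqref{eq:SY-length} then gives $l\le L$, a contradiction.

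The main obstacle is Steps 1--2. A minimizing $\widetilde g$-geodesic must exist, because $\widetilde g$ need not be complete on $\Omega$, and the minimizer must not hug the constraint set. The localization to a compact $g$-ball strictly inside $\Omega$, together with the truncation argument, is the device intended to handle both points. One should also verify that Shen--Ye's lemma needs only second-order minimality on a compact subarc with endpoints in the interior. That is how it is phrased in \eqref{eq:SY-length}, so the argument should close.
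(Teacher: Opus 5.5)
Your proof is correct. It shares the paper's core idea: produce a curve that minimizes $\widetilde g$-length from a point out to a compact barrier inside $\Omega$, bound its $g$-length from below by the $g$-distance to the barrier, and apply Shen--Ye. The technical device, however, is different.

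\textbf{The paper's route.} The paper fixes $x$ and exhausts $\Omega$ by smooth domains $\Omega_j\Subset\Omega$. It then extends $\widetilde g$ from a neighborhood of $\overline{\Omega_j}$ to a complete metric on $M$. Hopf--Rinow then gives a globally minimizing geodesic from $x$ to $\partial\Omega_j$, and this geodesic is second-order minimal along its entire length, endpoint included. The cost is a separate compactness argument showing $d_g(x,\partial\Omega_j)\to d_g(x,\partial\Omega)$.

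\textbf{Your route.} You take the $g$-sphere $\partial B_g(p,\rho)$ as the barrier. It sits at exact $g$-distance $\rho$ from $p$, so no limiting argument is needed. You get existence from Arzel\`a--Ascoli instead of a complete extension. The price is that your curve is only minimal among curves in $K$, so the full curve need not be second-order minimal near its endpoint on $S$. You handle this correctly: you truncate to an initial subarc of $g$-length $l<\rho$. That subarc lies compactly in the open ball, so every small fixed-endpoint variation stays in $K$.

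\textbf{Comparison.} Your argument is more self-contained: it needs no complete extension, no smooth exhaustion, and no boundary-distance convergence. The paper's argument is quicker to state once standard Riemannian tools are taken for granted.

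\textbf{One line to add.} Since $M$ is not assumed complete, you should justify that $\overline{B_g(p,\rho)}$ is compact and lies in $\Omega$. Any curve from $p$ to a point outside $\Omega$ must cross $\partial\Omega$, so $\{d_g(p,\cdot)\le\rho\}\subset\Omega$. This set is closed and contained in the compact set $\overline\Omega$, hence compact.
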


\begin{proof}
Fix $x\in\Omega$ and choose a nested smooth connected exhaustion
\[
\Omega_1\Subset\Omega_2\Subset\cdots\Subset\Omega,
\qquad x\in\Omega_1,
\qquad \bigcup_{j=1}^\infty\Omega_j=\Omega.
\]
Since $u$ is smooth and positive on $\Omega$, the conformal metric
\[
\widetilde g=u^{2\tau}g
\]
is smooth and nondegenerate on a neighborhood of $\overline{\Omega_j}$ for every $j$. Choose a complete metric on $M$ that agrees with $\widetilde g$ on such a neighborhood. Let $y_j\in\partial\Omega_j$ minimize the resulting distance from $x$ to $\partial\Omega_j$, and let $\alpha_j$ be a minimizing geodesic from $x$ to $y_j$. The geodesic remains in $\Omega_j$ except at its endpoint: otherwise its first intersection with $\partial\Omega_j$ would be closer to $x$ than $y_j$. Reparametrize $\alpha_j$ by $g$-arc length on $[0,l_j]$. It is minimizing to second order under fixed-endpoint variations, and hence the lower bound $\Ric_g^{u,\tau}\geq\eta g$ and Lemma 2 of \cite{ShenYe} imply
\begin{equation}\label{eq:curve-bound}
l_j
\leq
\frac{\pi}{\sqrt\eta}
\left(
 n-1+\frac{(n-3)^2}{\frac{4}{\tau}-n+1}
\right)^{\frac{1}{2}}.
\end{equation}
Because $\alpha_j$ joins $x$ to $\partial\Omega_j$,
\[
d_g(x,\partial\Omega_j)\leq l_j.
\]

We claim that
\[
d_g(x,\partial\Omega_j)\longrightarrow d_g(x,\partial\Omega).
\]
Indeed, the left-hand side is nondecreasing and bounded above by $d_g(x,\partial\Omega)$.  If its limit were strictly smaller, there would be $\delta>0$ and points $z_j\in\partial\Omega_j$ with
\[
d_g(x,z_j)\leq d_g(x,\partial\Omega)-\delta.
\]
After passing to a subsequence, compactness of $\overline\Omega$ gives $z_j\to z\in\overline\Omega$.  The point $z$ cannot lie in $\Omega$: otherwise $z$ would belong to the interior of some $\Omega_N$, and then $z_j\in\partial\Omega_j$ could not converge to $z$ for $j\geq N$.  Hence $z\in\partial\Omega$, contradicting
\[
d_g(x,z)\leq d_g(x,\partial\Omega)-\delta.
\]
Thus the claimed convergence holds.  Letting $j\to\infty$ in the preceding inequalities gives
\[
d_g(x,\partial\Omega)
\leq
\frac{\pi}{\sqrt\eta}
\left(
 n-1+\frac{(n-3)^2}{\frac{4}{\tau}-n+1}
\right)^{\frac{1}{2}}.
\]
Taking the supremum over $x\in\Omega$ proves \eqref{eq:radius}.
\end{proof}

\subsection{Weighted geometric means}

\begin{lemma}\label{lem:mean}
Let $F,G>0$, let $a,b\geq0$ with $a+b=1$, and set $U=F^aG^b$. Then
\begin{equation}\label{eq:mean}
\frac{\Delta U}{U}
=
a\frac{\Delta F}{F}
+b\frac{\Delta G}{G}
-ab|\nabla\ln F-\nabla\ln G|^2.
\end{equation}
\end{lemma}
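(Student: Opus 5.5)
The plan is to work with logarithms and compute directly. Set $\phi=\ln F$ and $\psi=\ln G$. Then $\ln U=a\phi+b\psi$, and this is where all the work happens.

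For any positive function $W$ we have the identity
\[
\frac{\Delta W}{W}=\Delta\ln W+|\nabla\ln W|^2 ,
\]
which comes from expanding $\operatorname{div}(W\nabla\ln W)$. Applying it to $U$, $F$ and $G$ gives:
\[
\frac{\Delta U}{U}=a\Delta\phi+b\Delta\psi+|a\nabla\phi+b\nabla\psi|^2 ,
\qquad
\frac{\Delta F}{F}=\Delta\phi+|\nabla\phi|^2,
\qquad
\frac{\Delta G}{G}=\Delta\psi+|\nabla\psi|^2 .
\]

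Next we take the difference between $\frac{\Delta U}{U}$ and the convex combination $a\frac{\Delta F}{F}+b\frac{\Delta G}{G}$. The Laplacian terms cancel by linearity, and what remains is
\[
|a\nabla\phi+b\nabla\psi|^2-a|\nabla\phi|^2-b|\nabla\psi|^2 .
\]

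Using $a+b=1$, we expand $a^2-a=-ab$ and $b^2-b=-ab$. The remainder then becomes
\[
-ab\bigl(|\nabla\phi|^2-2\langle\nabla\phi,\nabla\psi\rangle+|\nabla\psi|^2\bigr)=-ab|\nabla\phi-\nabla\psi|^2 ,
\]
which is exactly \eqref{eq:mean}.

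There is no real obstacle here; the only point needing care is the algebraic use of $a+b=1$ in collapsing the quadratic terms. The hypothesis $a,b\geq0$ is not needed for the identity itself, only for the later sign use of the $-ab|\cdot|^2$ term.
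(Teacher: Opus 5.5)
Your proof is correct and follows exactly the paper's route: the paper's proof is the one-line instruction to use $\ln U=a\ln F+b\ln G$ together with $\Delta U/U=\Delta\ln U+|\nabla\ln U|^2$, and you have carried out the resulting algebra in full, including the collapse $a^2-a=b^2-b=-ab$. Your side remark that $a,b\geq0$ is not needed for the identity itself, only for the sign of the $-ab|\nabla\ln F-\nabla\ln G|^2$ term used later, is also right.
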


\begin{proof}
Use $\ln U=a\ln F+b\ln G$ and
$\Delta U/U=\Delta\ln U+|\nabla\ln U|^2$.
\end{proof}

\section{Proof of the Main Results}

Let $w>0$ be the first Dirichlet eigenfunction of $\Omega$:
\[
\Delta_gw=-\lambda_1(\Omega)w,
\qquad
w=0\quad\text{on }\partial\Omega.
\]
Fix $0<\alpha<\frac{4}{n-1}-\sigma$ and set
\[
U=f^{\frac{\sigma}{\sigma+\alpha}}
  w^{\frac{\alpha}{\sigma+\alpha}}.
\]
By Lemma \ref{lem:mean},
\[
(\sigma+\alpha)\frac{\Delta_gU}{U}
=
\sigma\frac{\Delta_gf}{f}
-\alpha\lambda_1(\Omega)
-\frac{\sigma\alpha}{\sigma+\alpha}
|\nabla\ln f-\nabla\ln w|^2.
\]
Hence
\begin{equation}\label{eq:combined-conf-ric}
\Ric_g^{U,\sigma+\alpha}
\geq
\Ric_g^{f,\sigma}+\alpha\lambda_1(\Omega)g
\geq
\bigl(\alpha\lambda_1(\Omega)-\kappa\bigr)g.
\end{equation}
If $\kappa\geq0$ and $\alpha\lambda_1(\Omega)\leq\kappa$, the following estimate is immediate. In every other case---in particular, automatically when $\kappa<0$---one has $\alpha\lambda_1(\Omega)-\kappa>0$, and Proposition \ref{prop:radius}, applied with $u=U$ and $\tau=\sigma+\alpha$, gives the same estimate after rearrangement:
\begin{equation}\label{eq:domain-gamma}
\lambda_1(\Omega)
\leq
\frac{\kappa}{\alpha}
+
\frac{\pi^2}{\alpha r_\Omega^2}
\left(
 n-1+\frac{(n-3)^2}{\frac{4}{\sigma+\alpha}-n+1}
\right).
\end{equation}

We now optimize \eqref{eq:domain-gamma}. Recall that
\[
D_\sigma=4-(n-1)\sigma.
\]
The elementary identity
\[
\frac{\sigma+\alpha}
{\alpha\left(\frac{4}{n-1}-\sigma-\alpha\right)}
=
\frac{(n-1)\sigma}{D_\sigma}\frac{1}{\alpha}
+
\frac{4}{D_\sigma}
\frac{1}{\frac{4}{n-1}-\sigma-\alpha}
\]
gives the decomposition
\[
\frac{\kappa}{\alpha}
+
\frac{\pi^2}{\alpha r_\Omega^2}
\left(
 n-1+\frac{(n-3)^2}{\frac{4}{\sigma+\alpha}-n+1}
\right)
=
\frac{A}{\alpha}
+\frac{B}{\frac{4}{n-1}-\sigma-\alpha},
\]
where
\[
A=
\kappa+
\frac{4\pi^2\bigl((n-1)-(n-2)\sigma\bigr)}{D_\sigma r_\Omega^2},
\qquad
B=
\frac{4\pi^2(n-3)^2}{(n-1)D_\sigma r_\Omega^2}.
\]
The estimate \eqref{eq:domain-gamma} for all $0<\alpha<\frac{4}{n-1}-\sigma$ forces $A\geq0$; otherwise its right-hand side would tend to $-\infty$ as $\alpha\downarrow0$, contradicting $\lambda_1(\Omega)>0$. Since $B\geq0$, the elementary identity
\[
\inf_{0<\alpha<\frac{4}{n-1}-\sigma}
\left(
\frac{A}{\alpha}
+\frac{B}{\frac{4}{n-1}-\sigma-\alpha}
\right)
=
\frac{n-1}{D_\sigma}(\sqrt A+\sqrt B)^2
\]
(with the corresponding endpoint limit in the degenerate cases $A=0$ or $B=0$) yields exactly \eqref{eq:domain-main}. This proves Theorem \ref{thm:domain}.

To prove the global estimate \eqref{eq:main-bottom}, choose a smooth exhaustion by connected relatively compact domains $\Omega_j$ such that, for a fixed $o\in M$,
\[
B_o(j)\Subset\Omega_j.
\]
Then $r_{\Omega_j}\geq j$. Fix $0<\alpha<\frac{4}{n-1}-\sigma$ and apply the unoptimized estimate \eqref{eq:domain-gamma} to $\Omega_j$. Letting $j\to\infty$ gives
\[
\lambda_1(-\Delta_g)\leq\frac{\kappa}{\alpha}.
\]
Finally, letting $\alpha\uparrow\frac{4}{n-1}-\sigma$ proves \eqref{eq:main-bottom}.

\section{The Four-Dimensional Q-Curvature Application}

The four-dimensional $Q$-curvature is
\[
Q_g=\frac{1}{6}\left(-\Delta_gR_g-3|\Ric_g|^2+R_g^2\right);
\]
see \cite{Branson1985}.  For a unit vector $X$, the Cauchy inequality gives
\[
|\Ric_g|^2
\geq
\Ric_g(X,X)^2+
\frac{(R_g-\Ric_g(X,X))^2}{3}
\geq
-\frac{2}{3}R_g\Ric_g(X,X)+\frac{1}{3}R_g^2.
\]
Equivalently, when $R_g>0$,
\[
\Ric_g(X,X)-\frac{\Delta_gR_g}{2R_g}
\geq3\frac{Q_g}{R_g}.
\]
This is the same pointwise estimate obtained from the Chang--Gursky--Yang inequality \cite[Lemma 1.2]{CGY}. The same conformal-Ricci inequality was recently used by Li \cite{Li2026} to establish Bonnet--Myers type results under lower bounds involving $Q$-curvature. Consequently,
\begin{equation}\label{eq:Q-confRic-sec4}
\Ric_g^{R,\frac{1}{2}}
\geq3\frac{Q_g}{R_g}g.
\end{equation}

If $\frac{Q_g}{R_g}\geq-\kappa$ on a bounded domain, then \eqref{eq:Q-confRic-sec4} gives
\[
\Ric_g^{R,\frac{1}{2}}\geq-3\kappa g.
\]
Applying the optimized estimate \eqref{eq:domain-main} with
\[
n=4,
\qquad
\sigma=\frac{1}{2},
\qquad
D_{1/2}=\frac{5}{2},
\]
and replacing $\kappa$ there by $3\kappa$, we obtain directly
\[
\lambda_1(\Omega)
\leq
\frac{1}{25}
\left[
\sqrt{90\kappa+\frac{96\pi^2}{r_\Omega^2}}
+\frac{4\pi}{r_\Omega}
\right]^2,
\]
which is \eqref{eq:Q-domain}. If the same assumptions hold globally on a complete noncompact four-manifold with $\kappa\geq0$, the global estimate \eqref{eq:main-bottom} gives \eqref{eq:Q-bottom}.

The quotient $Q/R$ has the same homogeneity as a Laplace eigenvalue under constant rescaling. The problem of finding a conformal metric with constant $Q/R$ was recently studied by Ge--Wang--Wei \cite{GeWangWei}. The inequality
\[
\inf_M\frac{Q_g}{R_g}
\leq-\frac{5}{18}\lambda_1(-\Delta_g)
\]
shows that positive bottom spectrum forces a quantitative negative part of $Q/R$ whenever $R_g>0$.

\section{A Spectral Ricci Extension of Cheng's Estimate}

\begin{proof}[Proof of Theorem \ref{thm:spectral-cheng}]
We first prove the bounded-domain estimate \eqref{eq:spectral-domain}. If $\alpha=0$, then
\[
\lambda_1\bigl(\Ric(x),\Omega\bigr)=\inf_{x\in\Omega}\Ric(x),
\]
and hence
\[
\Ric_g\geq\lambda_1\bigl(\Ric(x),\Omega\bigr)g
\quad\text{on }\Omega.
\]
Applying the unoptimized estimate \eqref{eq:domain-gamma} with a constant conformal factor, $\sigma=0$, and with the free parameter there replaced by $\beta$, gives \eqref{eq:spectral-domain}.

Now assume $0<\alpha<\frac{4}{n-1}$. Since $\Ric(x)$ is continuous, choose smooth functions $V_j$ on a neighborhood of $\overline\Omega$ such that
\[
\Ric(x)-\frac{1}{j}\leq V_j\leq\Ric(x).
\]
Let $u_j>0$ be the first Dirichlet eigenfunction of $-\alpha\Delta_g+V_j$ on $\Omega$:
\[
-\alpha\Delta_gu_j+V_j u_j
=
\lambda_1\bigl(-\alpha\Delta_g+V_j,\Omega\bigr)u_j.
\]
Since $V_j\leq\Ric(x)$, one has $\Ric_g\geq V_jg$, and therefore
\[
\begin{aligned}
\Ric_g^{u_j,\alpha}
&=\Ric_g-\alpha\frac{\Delta_gu_j}{u_j}g\\
&\geq
V_jg-
\left(V_j-\lambda_1\bigl(-\alpha\Delta_g+V_j,\Omega\bigr)\right)g\\
&=
\lambda_1\bigl(-\alpha\Delta_g+V_j,\Omega\bigr)g.
\end{aligned}
\]
Applying \eqref{eq:domain-gamma} with $f=u_j$, $\sigma=\alpha$, $\kappa=-\lambda_1(-\alpha\Delta_g+V_j,\Omega)$, and with the free parameter there replaced by $\beta$, we obtain
\[
\lambda_1\bigl(-\alpha\Delta_g+V_j,\Omega\bigr)
+\beta\lambda_1(\Omega)
\leq
\frac{\pi^2}{r_\Omega^2}
\left(
 n-1+
 \frac{(n-3)^2}{\frac{4}{\alpha+\beta}-n+1}
\right).
\]
Moreover,
\[
\lambda_1\bigl(-\alpha\Delta_g+\Ric(x),\Omega\bigr)-\frac{1}{j}
\leq
\lambda_1\bigl(-\alpha\Delta_g+V_j,\Omega\bigr)
\leq
\lambda_1\bigl(-\alpha\Delta_g+\Ric(x),\Omega\bigr).
\]
Letting $j\to\infty$ proves \eqref{eq:spectral-domain}.

Now suppose that $(M,g)$ is complete and noncompact. Choose a smooth connected exhaustion
\[
\Omega_1\Subset\Omega_2\Subset\cdots\Subset M,
\qquad
\bigcup_{j=1}^\infty\Omega_j=M,
\]
such that, for a fixed $o\in M$,
$
B_o(j)\Subset\Omega_j.
$
Then $r_{\Omega_j}\geq j$. By the variational definitions and domain monotonicity,
\[
\lambda_1(\Omega_j)\downarrow\lambda_1(-\Delta_g)
\]
and
\[
\lambda_1\bigl(-\alpha\Delta_g+\Ric(x),\Omega_j\bigr)
\downarrow
\lambda_1\bigl(-\alpha\Delta_g+\Ric(x)\bigr).
\]
Fix
\[
0<\beta<\frac{4}{n-1}-\alpha.
\]
Applying \eqref{eq:spectral-domain} to $\Omega_j$ and letting $j\to\infty$ gives
\[
\lambda_1\bigl(-\alpha\Delta_g+\Ric(x)\bigr)
+\beta\lambda_1(-\Delta_g)
\leq0.
\]
Finally, letting
\[
\beta\uparrow\frac{4}{n-1}-\alpha
\]
yields
\[
\lambda_1\bigl(-\alpha\Delta_g+\Ric(x)\bigr)
\leq
-\left(\frac{4}{n-1}-\alpha\right)\lambda_1(-\Delta_g)
\leq0,
\]
which is \eqref{eq:spectral-cheng}.
\end{proof}

\section*{Declaration on the use of AI-assisted tools}

The author used ChatGPT 5.6 Sol for assistance with routine calculations and minor editorial revisions. All mathematical arguments, computations, and statements have been independently reviewed and verified by the author, who takes full responsibility for the content of the manuscript.

\bigskip
\noindent Xiaoshang Jin\\
School of Mathematics and Statistics, Huazhong University of Science and Technology,\\
Wuhan, Hubei 430074, China.\\
Email: \text{jinxs@hust.edu.cn}\par

\end{document}